\definecolor{darkblue}{RGB}{0,0,160}
\newcommand{\excise}[1]{}
\newcommand{\on}[1]{\operatorname{#1}}
\newcommand{\cox}[1]{\mathtt{#1}}
\newtheorem{thm}{Theorem}
\theoremstyle{definition}
\newtheorem{remark}[thm]{Remark}
\newtheorem{defn}[thm]{Definition}
\numberwithin{equation}{section}
\begin{document}
\title{Asymptotics of a Locally Dependent Statistic on Finite Reflection Groups}

\author[F.~R\"ottger]{Frank R\"ottger}
\address[]{Fakultät für Mathematik\\Otto-von-Guericke Universität
Magdeburg\\39106 Magdeburg\\Germany}
\email{frank.roettger@ovgu.de}
\urladdr{\url{http://www.imst3.ovgu.de/Arbeitsgruppe/Frank+Rottger}}

\begin{abstract}
This paper discusses the asymptotic behaviour of the number of descents in a random signed permutation and its inverse, which was posed as an open problem by Chatterjee and Diaconis in \cite{chatterjee2017}. For that purpose, we generalize their result for the asymptotic normality of the number of descents in a random permutation and its inverse to other finite reflection groups. This is achieved by applying their proof scheme to signed permutations, i.e.~elements of Coxeter groups of type $ \cox{B}_n $, which are also known as the hyperoctahedral groups.  Furthermore, a similar central limit theorem for elements of Coxeter groups of type $\cox{D}_n$ is derived via Slutsky's Theorem and a bound on the Wasserstein distance of certain normalized statistics with local dependency structures and bounded local components is proven for both types of Coxeter groups.
In addition, we show a two-dimensional central limit theorem via the Cram\'er-Wold device.
\end{abstract}

\maketitle
\setcounter{tocdepth}{1}

\section{Introduction}

A recent result of Chatterjee and Diaconis in \cite{chatterjee2017} was a new proof of the asymptotic normality of the number of descents in a random permutation and its inverse, normalized by its expected value and its variance. This was shown via the \emph{method of interaction graphs} and a bound on the Wasserstein distance to the standard normal distribution firstly introduced in \cite{chatterjee2008}. The \emph{method of interaction graphs} and the bound on the Wasserstein distance are shortly summarized in Section \ref{s:interactiongraphs}. The asymptotic normality of the number of descents in a random permutation and its inverse was already shown by Vatutin via generating functions in \cite{Vatutin} in 1996, but was not generalized to other statistics depending on both a random permutation and its inverse. Chatterjee and Diaconis showed such a generalization through a bound of the Wasserstein distance to the standard normal distribution for a wider class of normalized statistics that depend on a random permutation and its inverse. In the last section of \cite{chatterjee2017}, they issued the asymptotic normality of the number of descents in an element of a finite reflection group and its inverse, for example for random signed permutations, as an open problem and indicated, that their approach should also suffice in this case. This paper confirms their intution by applying their proof scheme on signed permutations, that is elements of the Coxeter group of type $ \cox{B}_n $. For that purpose we construct random signed permutations and their inverses from the same random variables. With this construction, we are able to apply the \emph{method of interaction graphs}, exactly like Chatterjee and Diaconis. Together with the bound on the Wasserstein distance between the normalized statistic and a standard normal distribution mentioned before (see Theorem \ref{t:chatterjee}), we can show the asymptotic normality by plugging in the formulas for the variance of the sum of the statistics into the bounds. Kahle and Stump listed the expected values and variances of the sum of the statistics for all finite irreducible Coxeter groups in \cite[Corollary 5.2]{KahleStump2018}.\\
Using this result for signed permutations, we can extend the result to elements of the Coxeter group of type $ \cox{D}_n $, which are signed permutations with an even number of negative signs. This is done via an application of Slutsky's Theorem (see Theorem \ref{t:CLT_D}).\\
To generalize these results to certain sums of statistics of both a random signed permutation and its inverse, which have a bounded local degree and local components which are bounded by $ 1 $, we again follow Chatterjee and Diaconis and modify the interaction graphs in the right way so that we can apply Theorem \ref{t:chatterjee}. From this, we show that this also works for elements of Coxeter groups of type $ \cox{D}_n $ (see Section \ref{s:generalization}). The last section discusses the asymtotic behaviour of the two-dimensional statistic formed by the number of descents in an element of a Coxeter group of both type $ \cox{B}_n $ and $ \cox{D}_n $ and its inverse via the Cram\'er-Wold device. 

\subsection*{Acknowledgements} 
I want to thank Philipp Godland, Hauke Seidel and in particular Norbert Gaffke for helpful comments and discussions. Furthermore, I want to thank my PhD-advisors Thomas Kahle and Rainer Schwabe for their support and guidance. \\
As a fellow of the research training group on Mathematical Complexity Reduction at the Otto-von-Guericke-University Magdeburg, I am funded by the Deutsche Forschungsgemeinschaft (DFG, German Research Foundation) - 314838170, GRK 2297 MathCoRe.

\section{Interaction Graphs}\label{s:interactiongraphs}

We give a short overview over the method of interaction graphs as it is presented in \cite{chatterjee2017}. Let $ (\mathcal{X},\mathcal{A}) $ be a measurable space and $ f:\mathcal{X}^{n}\rightarrow \mathbb{R} $ a measurable map. Consider a map $ G(x) $, which connects every $ x \in \mathcal{X}^n $ with a simple graph on $ [n]:=\{1,2,\ldots,n\} $.
This \emph{graphical rule} is \emph{symmetric}, if for a permutation $ \pi $ the graph $ G(x_{\pi(1)},\ldots,x_{\pi(n)}) $ has the edge set $ \{(\pi(i),\pi(j))|~ (i,j) ~\text{is an edge of }~G(x_{1},\ldots,x_{n})\} $. For $ m\ge n $, let  $ G'(x) $ for $ x \in \mathcal{X}^m $ be a symmetric graphical rule on $ \mathcal{X}^m $. $ G'(x) $ is an \emph{extension} of $ G(x) $, if $G(x)= G(x_1,\ldots,x_n) $ is a subgraph of $ G'(x)= G'(x_1,\ldots,x_m) $ for all $ x \in \mathcal{X}^m $. 
To define an interaction rule, let for $ x,x' \in \mathcal{X}^n $ 
\[  x^i:=(x_1,\ldots,x_{i-1},x'_{i},x_{i+1},\ldots,x_{n}). \]
Furthermore, let $ x^{ij} $ be the vector $ x $ with replacements in the $ i $-th and $ j $-th position. Then, $ i $ and $ j $ are \emph{non-interacting}, if
\[f(x)-f(x^j)=f(x^i)-f(x^{ij}).\]
A graphical rule $ G $ is an \emph{interaction rule} for a function $ f $, if for any $ x,x' \in \mathcal{X}^n $ and any $ i,j $, the edge $ (i,j) $ not being an edge of either $ G(x),G(x^i),G(x^j) $ or $ G(x^{ij}) $ implies that $ i $ and $ j $ are non-interacting.

The Wasserstein distance is a distance function on the space of probability measures \cite[Chapter~7]{ambrosiogiglisavare}.

\begin{defn}[Wasserstein distance, also known as Kantorovich–Rubinstein metric] 
	Let $ (M,d) $ be a metric space where every probability measure is a Radon measure and let $ P_p(M) $ be the collection of probability measures on $ M $ with finite $ p $-th moments.
	The $ L^p $-Wasserstein distance between $ X \sim \mu\in P_p(M)  $ and $ Y\sim \nu \in P_p(M) $ is defined as
	\[\delta_p(\mu,\nu)=\left(\inf \mathbb{E} \left[d(X,Y)^p\right] \right)^{\frac{1}{p}}, \]
	where the infimum is taken over all joint distributions of $ (X,Y)^T $ on $ M\times M $ with marginals $ \mu $ and~$ \nu $.
\end{defn}

\begin{defn}\label{d:rank-statistic}
	Let $ Y=(Y_1,\ldots,Y_n) $ be a vector of real-valued random variables distributed according to a continuous distribution. The rank statistic is defined as $ R(Y_i)=\sum_{j=1}^{n}\mathbbm{1}_{\{Y_i\,\ge \,Y_j\}} $, where $\mathbbm{1}_{\lbrace \cdot \rbrace}$ denotes the indicator function. The value of $ R(Y_i) $ gives the position of $ Y_i $ when $ Y $ is sorted in ascending order.
\end{defn}

We later apply the following theorem from \cite{chatterjee2008}, which can also be found in \cite{chatterjee2017}, on signed permutations. The Theorem gives a bound on the Wasserstein distance between a normalized statistic that admits a graphical interaction rule and the standard normal distribution.
\begin{thm}[Chatterjee]\label{t:chatterjee}
Let $ f:\mathcal{X}^n \rightarrow \mathbb{R}$ be a measurable map that admits a symmetric interaction rule $ G(x) $. Let $ X_1,X_2,\ldots $ be independent and identically distributed $ \mathcal{X} $-valued random variables and let $ X:=(X_1,\ldots,X_n) $. Let $ W:=f(X) $ and $ \sigma^{2}:=\mathbb{V}(W) $. Let $ X'=(X_1',\ldots,X_n') $ be an independent copy of $ X $.
For each $ j $, define 
\[\Delta_{j}f(X)=W-f(X_1,\ldots,X_{j-1},X_j',X_{j+1},\ldots,X_n)\]
and let $ M:=\max_{j}|\Delta_j f(X)| $. Let $ G'(x) $ be an extension of $ G(x) $ on $ \mathcal{X}^{n+4} $ and define
\[\delta :=1+~\text{degree of the vertex $ 1 $ in $ G'(X_1,\ldots,X_{n+4})$}.\]
Then, the Wasserstein distance $ \delta_W $ between $ \frac{W-\mathbb{E}(W)}{\sigma} $ and N$ (0,1) $ satisfies
\[\delta_W \le \frac{C\sqrt{n}}{\sigma^2}\mathbb{E}(M^8)^{\frac{1}{4}}\mathbb{E}(\delta^4)^{\frac{1}{4}}+\frac{1}{2\sigma^3}\sum_{j=1}^{n} \mathbb{E}|\Delta_j f(X)|^3 \]
for some constant $ C $ independent of $ n $.
\end{thm}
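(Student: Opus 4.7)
My plan is to apply Stein's method combined with a resampling construction that exploits the interaction graph $G$. I begin with the dual characterization
\[
\delta_W = \sup_{h\in\mathrm{Lip}(1)} \bigl| \mathbb{E}h(W^*) - \mathbb{E}h(Z) \bigr|, \qquad W^* = \frac{W-\mathbb{E}W}{\sigma},\ Z \sim N(0,1),
\]
and for each $1$-Lipschitz $h$ invoke the standard bounded solution $\phi=\phi_h$ of Stein's equation $\phi'(x) - x\phi(x) = h(x)- \mathbb{E}h(Z)$, for which $\|\phi'\|_\infty$ and $\|\phi''\|_\infty$ are controlled by absolute constants. The task reduces to a uniform bound on $|\mathbb{E}[\phi'(W^*) - W^*\phi(W^*)]|$.

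To convert $\mathbb{E}[W^*\phi(W^*)]$ into something comparable with $\mathbb{E}[\phi'(W^*)]$, I would apply a Lindeberg-style telescope along the resampled vectors $X^j$ from the excerpt, combined with a second-order Taylor expansion of $\phi$. After the usual manipulations this produces an identity of the form
\[
\mathbb{E}[W^*\phi(W^*)] = \mathbb{E}\!\left[\phi'(W^*) \cdot \frac{T(X)}{\sigma^2}\right] + R,
\]
where $T(X)$ is a symmetrized Stein coefficient built from the differences $\Delta_j f(X)$ together with an auxiliary independent resampling, satisfying $\mathbb{E}[T(X)] = \sigma^2$, and $R$ is a cubic Taylor tail bounded by $\tfrac{1}{2\sigma^3}\sum_j \mathbb{E}|\Delta_j f(X)|^3$. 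The latter is exactly the second summand of the stated bound, while Cauchy--Schwarz gives
\[
\bigl| \mathbb{E}[\phi'(W^*)(1-T/\sigma^2)] \bigr| \le \|\phi'\|_\infty\,\sigma^{-2}\sqrt{\mathrm{Var}(T)},
\]
so the remaining work is to estimate $\mathrm{Var}(T)$.

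The interaction graph now does the heavy lifting. Writing $\mathrm{Var}(T)$ as a double sum of covariances between the contributions of indices $j$ and $k$, the interaction-rule property forces this covariance to vanish whenever the edge $(j,k)$ is absent from each of $G(X), G(X^j), G(X^k), G(X^{jk})$. Passing to the extension $G'$ on $\mathcal{X}^{n+4}$ is exactly what lets me realize these four graphs on a single probability space under one symmetric rule, so that every surviving covariance corresponds to an edge of $G'$; by symmetry the expected number of such $k$'s per $j$ is bounded by $\mathbb{E}\delta$. Each non-zero term is of order $M^2$, so applying Hölder with exponents $(4,4)$ to split $\mathbb{E}[\delta^2 M^4]$ as $\mathbb{E}[\delta^4]^{1/2}\mathbb{E}[M^8]^{1/2}$ yields
\[
\mathrm{Var}(T) \le C\, n\, \mathbb{E}[\delta^4]^{1/2}\,\mathbb{E}[M^8]^{1/2};
\]
taking the square root and dividing by $\sigma^2$ reproduces the first term of the theorem.

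The main obstacle I anticipate is building the symmetrized coefficient $T$ so that its variance expansion admits the zero-covariance principle while simultaneously having $\mathbb{E}[T] = \sigma^2$: the four auxiliary i.i.d.\ copies $X_{n+1},\ldots,X_{n+4}$ underlying the extension $G'$ are precisely what encode the two pairs of resamplings entering $T$ and its shifted partner, and ensuring the graphical rule really extends symmetrically under all resamplings is the delicate measurability point. Once this construction is in place, the Taylor remainder and the combinatorial edge count become routine.
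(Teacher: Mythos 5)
First, a point of comparison: the paper offers no proof of this statement. Theorem \ref{t:chatterjee} is imported from Chatterjee's 2008 paper (via Chatterjee--Diaconis), so there is no in-paper argument to measure yours against. Your outline does follow the strategy of the original source: Stein's method in the Kantorovich--Rubinstein dual form, an approximate Stein identity with a coefficient $T$ built from the discrete differences $\Delta_j f$, a variance bound for $T$ obtained from the interaction rule and the extension $G'$ on $\mathcal{X}^{n+4}$, and a Cauchy--Schwarz/H\"older split producing $\mathbb{E}(M^8)^{1/4}\mathbb{E}(\delta^4)^{1/4}$ alongside the cubic remainder $\frac{1}{2\sigma^3}\sum_j\mathbb{E}|\Delta_jf(X)|^3$.

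There is, however, a genuine gap exactly where you flag ``the main obstacle'': the coefficient $T$ is never constructed, and the identity $\mathbb{E}[(W-\mathbb{E}W)g(W)]=\mathbb{E}[T\,g'(W)]+R$ with the stated cubic remainder is not a routine consequence of a Lindeberg-style telescope along the singly-resampled vectors $X^j$ plus Taylor expansion. In Chatterjee's argument $T$ is a specific combinatorial average of the form
\[
T=\frac12\sum_{j=1}^{n}\ \sum_{A\subseteq[n]\setminus\{j\}}\frac{\Delta_jf(X)\,\Delta_jf(X^{A\cup\{j\}})}{\binom{n-1}{|A|}\,(n-|A|)},
\]
i.e.\ one resamples a uniformly random \emph{subset} of coordinates rather than one coordinate at a time; it is this averaging over subsets that makes the Stein identity exact up to the third-moment term and yields $\mathbb{E}[T]=\sigma^2$ (take $g(x)=x$ in the identity). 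Without it, neither the form of $R$ nor the centering of $T$ is justified. Relatedly, your zero-covariance principle is phrased for the graphs $G(X),G(X^j),G(X^k),G(X^{jk})$, but the covariances appearing in $\mathrm{Var}(T)$ involve configurations mixing $X$, $X'$ and the subset-resampled vectors $X^A$; the non-interaction argument must be run at those arguments, and it is precisely to dominate the resulting interaction indicators by a single exchangeable quantity that one needs the symmetric extension $G'$ on $n+4$ coordinates and the degree $\delta$ of vertex $1$. Your sketch names the right objects and the exponent bookkeeping at the end is consistent with the claimed bound, but as written the proof cannot be completed without supplying the construction of $T$ and the subset-resampling identity.
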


Chatterjee and Diaconis used the theorem above to show a central limit theorem for statistics of the form $ F_1(\pi)+F_2(\pi^{-1}) $, where both $ F_1 $ and $ F_2 $ have bounded local degree and their local components' absolute values are bounded by $ 1 $. Hereby $ \pi $ denoted a permutation, hence an element of a Coxeter group of type $ \cox{A}_n $. We apply the same proof scheme to statistics on signed permutation by modifying their model.

\section{Signed Permutations}

Chatterjee and Diaconis modeled elements of the symmetric group $ S_n = \cox{A}_{n-1} $ and their inverses by ranking functions on series of uniformly distributed random variables on the unit square. We slightly modify this model by additionally introducing a random sign.
The Coxeter group of type $\cox{B}_n$ is the symmetry group of the $ n $-hypercube. 
It is isomorphic to the signed permutation group of rank $ n $, which is the subgroup of all permutations on $\{ \pm 1,\ldots,\pm n\} $ with the antisymmetric constraint $ -\tilde{\pi}(i)=\tilde{\pi}(-i) $. In a one-line notation we write $ \tilde{\pi}=(\tilde{\pi}(1),\ldots,\tilde{\pi}(n)) $ where $ \tilde{\pi}(i)\in \{ \pm 1,\ldots,\pm n\} $ and $ \{|\tilde{\pi}(1)|,\ldots,|\tilde{\pi}(n)|\}=[n] $.
Following \cite[Proposition 8.1.2]{BB:CombinatoricsCoxetergroups}, it holds that the descents in some signed permutation $ \tilde{\pi} \in \cox{B}_n $ in the one-line notation are
\[\on{Des}(\tilde{\pi})=\{0\le i < n: \tilde{\pi}(i)> \tilde{\pi}(i+1) \},\] 
where $ \tilde{\pi}(0)=0 $. We write for $ \tilde{\pi} \in \cox{B}_n $
\begin{align}
\on{des}(\tilde{\pi})=|\on{Des}(\tilde{\pi})|=\mathbbm{1}_{\{0>\tilde{\pi}(1)\}}+\sum_{i=1}^{n-1}\mathbbm{1}_{\{\tilde{\pi}(i)>\tilde{\pi}(i+1)\}}.
\end{align}
In the following theorem, we study the asymptotic behaviour of the statistic
\[t(\tilde{\pi})=\on{des}(\tilde{\pi})+\on{des}(\tilde{\pi}^{-1}).\]
If $ \tilde{\pi} $ is picked uniformly from $ \cox{B_n} $, the statistic $ t(\tilde{\pi}) $ gives rise to a random variable $ T_{\cox{B}_n} $. 
We show a central limit theorem for the sequence $ (T_{\cox{B}_n})_n $, normalized by its expected value and its variance, so
\begin{align}\label{conv:CLT}
\frac{T_{\cox{B}_n}-\mathbb{E}(T_{\cox{B}_n})}{\sqrt{\mathbb{V}(T_{\cox{B}_n})}}\stackrel{D}{\rightarrow} N(0,1),
\end{align}
by adapting the proof of Theorem 1.1 in \cite{chatterjee2017} for the modified model. 

\begin{thm}\label{t:CLT_B}
	Given a sequence of Coxeter groups of type $  \cox{B}_n $ of growing rank. Then,
	$ T_{\cox{B}_n} $
	satisfies the central limit theorem, if $ n $ tends to infinity. 
\end{thm}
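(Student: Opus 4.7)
The plan is to apply Theorem~\ref{t:chatterjee} after realizing $\tilde{\pi}$ and $\tilde{\pi}^{-1}$ jointly as measurable functions of a single iid input sequence, following the scheme of \cite{chatterjee2017} but enlarged to carry random signs. Concretely, I would let $X_i = (U_i, V_i, \epsilon_i)$ for $i = 1,\ldots,n$, where $U_i, V_i$ are iid uniform on $[0,1]$, $\epsilon_i$ is iid uniform on $\{\pm 1\}$, and all of these are mutually independent. Define $\tilde{\pi}$ by $\tilde{\pi}(R(U_i)) := \epsilon_i R(V_i)$ for every $i$, extended by antisymmetry. A short check shows that $\tilde{\pi}$ is uniformly distributed on $\cox{B}_n$ and that $\tilde{\pi}^{-1}(R(V_i)) = \epsilon_i R(U_i)$, so both signed permutations are measurable functions of $X = (X_1,\ldots,X_n)$, and $t(\tilde{\pi}) = f(X)$ for a measurable $f : \mathcal{X}^n \to \mathbb{R}$ with $\mathcal{X} = [0,1]^2 \times \{\pm 1\}$.

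Next I would set up the interaction graph $G(x)$ by placing an edge between indices $a \ne b$ whenever $|R(U_a) - R(U_b)| = 1$ or $|R(V_a) - R(V_b)| = 1$, together with a loop at the index realizing $U$-rank $1$ and at the one realizing $V$-rank $1$ to capture the leading sign terms $\mathbbm{1}_{\{0 > \tilde{\pi}(1)\}}$ and its inverse counterpart. The non-interaction property rests on the observation that the descent indicator at $(k, k{+}1)$ in $\tilde{\pi}$ is a function only of $\epsilon_{i_k}, \epsilon_{i_{k+1}}$ and of the comparison $V_{i_k}$ versus $V_{i_{k+1}}$, where $i_k$ is the index with $R(U_{i_k}) = k$; hence if the edge $(i,j)$ is absent from each of $G(x), G(x^i), G(x^j), G(x^{ij})$, no descent contribution depends jointly on $i$ and $j$, so $f(x) - f(x^j) = f(x^i) - f(x^{ij})$. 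Symmetry of $G$ is built in, and an extension $G'$ on $\mathcal{X}^{n+4}$ is obtained by the same rule.

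To estimate the bound in Theorem~\ref{t:chatterjee} I would observe that replacing a single $X_j$ shifts the rank of $j$ in each of the $U$- and $V$-orders and alters only a bounded (independent of $n$) number of descent indicators in each of $\tilde{\pi}$ and $\tilde{\pi}^{-1}$; hence $M$ is bounded by an absolute constant and $\sum_{j=1}^n \mathbb{E}|\Delta_j f(X)|^3 = O(n)$. The degree of vertex $1$ in $G'(X_1,\ldots,X_{n+4})$ is at most four (two $U$-rank neighbours and two $V$-rank neighbours), so $\mathbb{E}(\delta^4)^{1/4} = O(1)$. By \cite[Corollary~5.2]{KahleStump2018}, $\sigma^2 = \mathbb{V}(T_{\cox{B}_n}) = \Theta(n)$. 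Substituting these estimates yields
\[
\delta_W \le \frac{C\sqrt{n}}{\sigma^2}\,\mathbb{E}(M^8)^{1/4}\mathbb{E}(\delta^4)^{1/4} + \frac{1}{2\sigma^3}\sum_{j=1}^n \mathbb{E}|\Delta_j f(X)|^3 = O(n^{-1/2}) \to 0,
\]
and since Wasserstein convergence to $N(0,1)$ implies convergence in distribution, \eqref{conv:CLT} follows.

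The main technical obstacle is the careful verification of the non-interaction property in the presence of the $\epsilon$-values: one has to argue that the signs only couple the two indices adjacent in the relevant rank-order and do not create additional long-range dependencies, and that the boundary terms $\mathbbm{1}_{\{0 > \tilde{\pi}(1)\}}$ and $\mathbbm{1}_{\{0 > \tilde{\pi}^{-1}(1)\}}$ are absorbed into $G$ without inflating its local degree. Once the graph is set up correctly, the remainder of the argument is a direct transcription of the $\cox{A}_{n-1}$ proof in \cite{chatterjee2017} to the enlarged state space $\mathcal{X}$, and the sharp linear order of $\sigma^2$ from \cite{KahleStump2018} is precisely what is needed to close the bound.
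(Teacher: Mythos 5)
Your construction is essentially the paper's: the same enlarged state space $[0,1]^2\times\{\pm 1\}$, the same joint realization of $\tilde{\pi}$ and $\tilde{\pi}^{-1}$ from a single iid vector, the same rank-adjacency interaction graph, and the same use of the Kahle--Stump variance formula $\mathbb{V}(T_{\cox{B}_n})=\Theta(n)$ to close the bound. (The loops you add for the boundary terms are unnecessary: the graphical rules here are simple graphs, and $\mathbbm{1}_{\{0>\tilde{\pi}(1)\}}$ depends on a single index, so it can only create a \emph{pairwise} interaction between $i$ and $j$ when both are near x-rank $1$, in which case they are already adjacent.)

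One step fails as written. You take the extension $G'$ on $\mathcal{X}^{n+4}$ to be given by \emph{the same rule} as $G$ (ranks differing by exactly $1$) and conclude that the degree of vertex $1$ in $G'$ is at most four. That graph is not an extension of $G$ in the sense required by Theorem~\ref{t:chatterjee}: the extension property demands that $G(x_1,\ldots,x_n)$ be a subgraph of $G'(x_1,\ldots,x_{n+4})$ for \emph{every} choice of the four extra points, and if those four points are inserted between two coordinates whose $U$-ranks were adjacent among the first $n$, the corresponding edge is present in $G(x_1,\ldots,x_n)$ but absent from your $G'$. The standard fix, used in the paper and in Chatterjee--Diaconis, is to declare $\{i,j\}$ an edge of $G'$ whenever the x-ranks or the y-ranks differ by at most $5$; the degree bound for vertex $1$ then becomes $20$ rather than $4$. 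Since this is still $O(1)$, the final estimate $\delta_W=O(n^{-1/2})$ and the conclusion are unaffected, but the extension must be defined with the relaxed threshold for the hypotheses of Theorem~\ref{t:chatterjee} to hold.
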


\begin{proof}

Let $ \mathcal{X}:=[0,1]^2 \times \{-1,1\} $ and $ X_1,X_2,\ldots $ be independent and identically distributed of the form $ (U_i,V_i,B_i) $ with $ (U_i,V_i)\sim\,$Unif$\left( [0,1]^2\right)$ and $ B_i\sim\,$Ber$ (\frac{1}{2}) $ on $ \{-1,1\} $ and independent of $ (U_i,V_i) $. Let $ X:=(X_1,\ldots,X_n) $ and let the x-rank of $ X_i $ be the rank statistic (cf.~\cref{d:rank-statistic}) of $ U_i $ among $ (U_1,\ldots,U_n) $ and the y-rank of $ X_i $ the rank statistic of $ V_i $ among $ (V_1,\ldots,V_n) $, so that 
$ X_{(1)},\ldots,X_{(n)} $ denote the $ X_i $ ordered with respect to their x-ranks and $ X^{(1)},\ldots,X^{(n)} $ with respect to their y-ranks. This means that $ \pi(i)=\text{y-rank of}~ X_{(i)} $ is a random permutation and $ \sigma(i)=\text{x-rank of}~ X^{(i)} $ is its inverse.
Now, to see that
\begin{equation*}
\tilde{\pi}(i):=B_{(|i|)}\text{sign}(i)\pi(|i|), \qquad \tilde{\sigma}(i):=B^{(|i|)}\text{sign}(i)\sigma(|i|)
\end{equation*}
define random signed permutations, just check that $ \tilde{\pi}(-i)=-\tilde{\pi}(i) $ and $ \tilde{\sigma}(-i)=-\tilde{\sigma}(i) $ and that $ \tilde{\pi}(i) $ and $ \tilde{\sigma}(i) $ are injective.
Furthermore it follows that $ \tilde{\sigma}=\tilde{\pi}^{-1} $, as $ B_{(\sigma(|i|))}=B^{(|i|)} $ and
\[\tilde{\pi}(\tilde{\sigma}(i))=B_{(|\tilde{\sigma}(i)|)}\text{sign}(\tilde{\sigma}(i))\pi(|\tilde{\sigma}(i)|)=B_{(\sigma(|i|))}\text{sign}(B^{(|i|)}\text{sign}(i))\pi(\sigma(|i|))=i.\]

Therefore the number of descents in the signed permutation and its inverse is given by:
\begin{align}\label{def:W_B}
T_{\cox{B}_n}&:=f(X)=\sum_{i=0}^{n-1}\mathbbm{1}_{\{\tilde{\pi}(i)> \tilde{\pi}(i+1)\}}+\sum_{i=0}^{n-1}\mathbbm{1}_{\{\tilde{\sigma}(i)> \tilde{\sigma}(i+1)\}}\\
&=\mathbbm{1}_{\{0>B_{(1)}\pi(1)\}}+\sum_{i=1}^{n-1}\mathbbm{1}_{\{B_{(i)}\pi(i)>B_{(i+1)}\pi(i+1)\}}+\mathbbm{1}_{\{0>B^{(1)}\sigma(1)\}}+\sum_{i=1}^{n-1}\mathbbm{1}_{\{B^{(i)}\sigma(i)>B^{(i+1)}\sigma(i+1)\}} \nonumber
\end{align}

For $ x\in \mathcal{X}^n $, define a simple graph $ G(x) $ on $ [n] $ as follows: For any $ 1\le i \neq j \le n $, let $ \{i,j\} $ be an edge if and only if the x-rank of $ x_i $ and the x-rank of $ x_j $ or the y-rank of $ x_i $ and the y-rank of $ x_j $ differ by at most $ 1 $. To check that this graphical rule is symmetric, see that the edge set of a relabeled Graph $ G(x_{\pi(1)},\ldots,x_{\pi(n)}) $, where $ \pi $ is an arbitrary permutation, has the edge set $ \{(\pi(i),\pi(j))|~ (i,j) ~\text{is an edge of }~G(x_{1},\ldots,x_{n})\} $. This is true, since the x-ranks or the y-ranks of $ x_{\pi(i)} $ are equal to the respective ranks of $ x_i $. Hence this graph is invariant under relabeling of the indices and it is therefore a symmetric graphical rule.
Given $ x,x'\in \mathcal{X}^n $, $ x^{i} $ is the vector $ (x_1,\ldots x_{i-1},x'_i,x_{i+1},\ldots,x_n) $, so the vector $ x $ in which the $ i $-th entry is replaced by the $ i $-th entry of $ x' $. Furthermore, $ x^{ij} $ is the vector with replacements in the $ i $-th and the $ j $-th entry.
Now, suppose that $ (i,j) $ is not an edge in $ G(x),G(x^{i}),G(x^{j}) $ or $ G(x^{ij}) $. Then, the equation
\[f(x)-f(x^j)=f(x^i)-f(x^{ij})\]
holds, as $ j $ is not a neighbour of $ i $ in either of the four graphs. To better visualize this, check that
\begin{align}\label{eq:symmetric}
f(x)=f(x^i)+f(x^j)-f(x^{ij}).
\end{align}
Any indicator function in $ f(x) $, that is not dependent of either $ x_i $ or $ x_j $, appears in $ f(x^i),f(x^j) $ and $ f(x^{ij}) $, as it is left unchanged by the replacements in $ x^i,x^j $ or $ x^{ij} $. Those indicator functions, that depend on $ x_i $ but not on $ x_j $, are unchanged in $ f(x^j) $. As $ i $ and $ j $ are no neighbours in all four graphs, these indicator functions, that depend on $ x^i $ but not on $ x^j $, appear in both $ f(x^i) $ and $ f(x^{ij}) $. Therefore, the indicator functions that either depend on $ x_i $ or on $ x_j $ turn up exactly once on both sides of the equation. Hence Equation (\ref{eq:symmetric}) holds, since there cannot be an indicator functions that depend on both $ x^i $ and $ x^j $, as $ i $ and $ j $ are no neighbours in all four graphs.
This means, that $ G(x) $ is a symmetric interaction rule for $ f $. 
Now, we construct an extension $ G'(x) $ of $ G(x) $ on $ \mathcal{X}^{n+4} $. For any $ 1\le i \neq j \le n+4 $, let $ \{i,j\} $ be an edge in $ G'(x) $ if and only if the x-rank of $ x_i $ and the x-rank of $ x_j $ or the y-rank of $ x_i $ and the y-rank of $ x_j $, differ by at most $ 5 $. As this graph is invariant under relabeling of the indices, it is a symmetric graphical rule. Obviously, every edge in $ G(x) $ is also an edge in $ G'(x) $, as the distance between two connected nodes in $ G(x) $ can be $ 5 $ at most through the insertion of four additional nodes. Therefore $ G'(x) $ is an extension of $ G(x) $.
As $ T_{\cox{B}_n} $ and $ f(X_1,\ldots,X_{j-1},X_j',X_{j+1},\ldots,X_n) $ can differ in at most $ 4 $ summands, $ |\Delta_{j}f(X)|\le 4 $. Furthermore, the degree of any node in $ G'(x) $ is bounded by $ 20 $, as either the difference in the x-ranks or in the y-ranks has to be smaller or equal to $ 5 $. This means, that $ |\delta|\le 21 $. Then, by Theorem \ref{t:chatterjee},
\[\delta_{T_{\cox{B}_n}} \le \frac{C \sqrt{n}}{\sigma^2}+\frac{Cn}{\sigma^3}\]
for some constant $ C $. As \cite{KahleStump2018} shows, $ \sigma^2=\mathbb{V}(T_{\cox{B}_n})=\frac{n+3}{6} $. Therefore, $ T_{\cox{B}_n} $ follows the central limit theorem.
 
\end{proof}

\section{Coxeter Group of Type \texorpdfstring{$\cox{D}_n$}{Dn}}

This section reproduces the previous section's result for sequences of Coxeter groups of type $ \cox{D}_n $. The Coxeter group of type $\cox{D}_n$ is the symmetry group of the $ n $-demicube. It is isomorphic to the subgroup of the signed permutation group of rank $ n $ that consist of all signed permutation with an even number of negative signs. This means, that
\[\cox{D}_n=\{\pi\in \cox{B}_n:\prod_{i=1}^n\pi(i)>0 \}.\]
For some $ \pi \in \cox{D}_n $, it holds that
\[\on{Des}(\pi)=\{0\le i < n: \pi(i)> \pi(i+1) \},\] 
where $ \pi(0)=-\pi(2) $ \cite[Proposition 8.2.2]{BB:CombinatoricsCoxetergroups}. We write for $ \pi \in \cox{D}_n $
\begin{align} \label{eq:descentsTypD}
\on{des}(\pi)=|\on{Des}(\pi)|=\mathbbm{1}_{\{-\pi(2)>\pi(1)\}}+\sum_{i=1}^{n-1}\mathbbm{1}_{\{\pi(i)>\pi(i+1)\}}.
\end{align}

We can reuse the model from the proof of Theorem \ref{t:CLT_B} to generate $ T_{\cox{D}_n} $, with a slight modification: One sign-generating random variable is set to be the product of all the others. Therefore, the number of negative signs is always even. Of course it is not possible to directly apply the method of interaction graphs, as the local dependency structure is destroyed by one random variable being dependent of all the others. This problem is solved via an application of Slutsky's Theorem.

\begin{thm}\label{t:CLT_D}
	Let $ W_n  $ be a sequence of growing rank of Coxeter groups of type $  \cox{D}_n $. Then,
	$ T_{\cox{D}_n} $
	satisfies the central limit theorem, if $ n $ tends to infinity.
\end{thm}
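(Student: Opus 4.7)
The plan is to deduce the theorem from Theorem \ref{t:CLT_B} via Slutsky's Theorem by coupling the two statistics on a single probability space and showing that they differ by a bounded error.

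First, I would reuse the $\cox{B}_n$-construction from the proof of Theorem \ref{t:CLT_B}, with the same triples $X_i=(U_i,V_i,B_i)$ for $i=1,\ldots,n$, but replace $B_n$ by $\tilde{B}_n:=\prod_{i=1}^{n-1}B_i$. Since $B_1,\ldots,B_{n-1}$ remain i.i.d.\ uniform on $\{-1,1\}$ and $\tilde{B}_n$ forces $\prod_{i=1}^n\tilde{B}_i>0$, the resulting $\tilde{\pi}$ is uniformly distributed on $\cox{D}_n$ (and its inverse $\tilde{\sigma}$ as well, by the same argument as in the proof of Theorem \ref{t:CLT_B}). On the same probability space I then have both $T_{\cox{B}_n}$ (built from the original signs) and $T_{\cox{D}_n}$ (built from the modified signs, using the descent rule $\pi(0)=-\pi(2)$ from \eqref{eq:descentsTypD}).

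Next, I would write $T_{\cox{D}_n}=T_{\cox{B}_n}+R_n$ and show $|R_n|\le c$ for an absolute constant $c$. Two sources feed into $R_n$: (i) the replacement of $\mathbbm{1}_{\{0>\tilde{\pi}(1)\}}$ by $\mathbbm{1}_{\{-\tilde{\pi}(2)>\tilde{\pi}(1)\}}$ in the leading term (and the analogous change for $\tilde{\sigma}$), contributing at most $2$ in absolute value; and (ii) when $\tilde{B}_n\ne B_n$, the flip of a single sign, which only affects the two descent indicators adjacent to the x-rank of the flipped entry in $\tilde{\pi}$ and the two adjacent to its y-rank in $\tilde{\sigma}$. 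Since each flipped entry participates in boundedly many summands of \eqref{def:W_B}, the total discrepancy is bounded by an absolute constant.

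Finally, I would invoke Slutsky twice. By Theorem \ref{t:CLT_B}, $(T_{\cox{B}_n}-\mathbb{E}T_{\cox{B}_n})/\sigma_{\cox{B}_n}\xrightarrow{D}N(0,1)$ with $\sigma_{\cox{B}_n}^2=(n+3)/6\to\infty$, and $(R_n-\mathbb{E}R_n)/\sigma_{\cox{B}_n}\to 0$ in probability because $R_n$ is uniformly bounded. So $(T_{\cox{D}_n}-\mathbb{E}T_{\cox{D}_n})/\sigma_{\cox{B}_n}\xrightarrow{D}N(0,1)$. Using the value of $\mathbb{V}(T_{\cox{D}_n})$ from \cite[Corollary 5.2]{KahleStump2018} to check that $\sigma_{\cox{D}_n}/\sigma_{\cox{B}_n}\to 1$, a second application of Slutsky gives the desired CLT for $T_{\cox{D}_n}$. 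The one step that requires genuine care is the uniform bound on $|R_n|$: one must carefully track which summands in \eqref{def:W_B} can possibly change when a single sign is flipped and when the boundary indicator is redefined, and confirm that this count does not grow with $n$. Everything else is routine once this bound is in place.
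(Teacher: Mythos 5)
Your proposal is correct and follows essentially the same route as the paper: generate the type-$\cox{D}_n$ element by forcing the last sign to be the product of the others, isolate a type-$\cox{B}$ statistic that differs from $T_{\cox{D}_n}$ by a uniformly bounded remainder, and conclude with Slutsky's theorem using the variance asymptotics from \cite{KahleStump2018}. The only (harmless) difference is that you compare against $T_{\cox{B}_n}$ built from an auxiliary independent sign $B_n$ on the same probability space, whereas the paper removes all indicators involving the dependent sign and compares against a copy of $T_{\cox{B}_{n-1}}$.
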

\begin{proof}
	Let $ \mathcal{X}:=[0,1]^2 \times \{-1,1\} $ and $ X_1,X_2,\ldots,X_{n-1} $ be independent and identically distributed of the form $ (U_i,V_i,B_i) $ with $ (U_i,V_i)\sim $Unif$ \left([0,1]^2\right) $ and $ B_i\sim $Ber$ (\frac{1}{2}) $ on $ \{-1,1\} $. Furthermore, set $ X_n=~(U_n,V_n,\prod_{i=1}^{n-1}B_i) $ with $ (U_n,V_n)\sim $Unif$ \left([0,1]^2\right) $ and $ B_n=\prod_{i=1}^{n-1}B_i $. The product of independent Ber$ (\frac{1}{2}) $-distributed random variables on $ \{-1,1\} $ is again Ber$ (\frac{1}{2}) $-distributed on $ \{-1,1\} $. Let $ X:=(X_1,\ldots,X_n) $ and let the x-rank and the y-rank of $ X $ be defined as in the proof of Theorem~\ref{t:CLT_B}.
	$ X_{(1)},\ldots,X_{(n)} $ denote the $ X_i $ ordered in respect to their x-ranks and $ X^{(1)},\ldots,X^{(n)} $ in respect to their y-ranks. Then, as in (\ref{eq:descentsTypD}), if $ \tilde{\pi} \in \cox{D}_n $ and $ \tilde{\pi}^{-1}=\tilde{\sigma} $, with $ \tilde{\pi}(0)=-\tilde{\pi}(2) $ we obtain
	\begin{align*}
	T_{\cox{D}_n}&=\sum_{i=0}^{n-1}\mathbbm{1}_{\{\tilde{\pi}(i)> \tilde{\pi}(i+1)\}}+\sum_{i=0}^{n-1}\mathbbm{1}_{\{\tilde{\sigma}(i)> \tilde{\sigma}(i+1)\}}\\
	&=\mathbbm{1}_{\{-B_{(2)}V_{(2)}>B_{(1)}V_{(1)}\}}+\sum_{i=1}^{n-1}\mathbbm{1}_{\{B_{(i)}V_{(i)}>B_{(i+1)}V_{(i+1)}\}}\\
	&\qquad \qquad  \qquad \qquad \qquad \qquad +\mathbbm{1}_{\{-B^{(2)}U^{(2)}>B^{(1)}U^{(1)}\}}+\sum_{i=1}^{n-1}\mathbbm{1}_{\{B^{(i)}U^{(i)}>B^{(i+1)}U^{(i+1)}\}}.
	\end{align*}
	Now, remove all the indicator functions from $ T_{\cox{D}_n} $ where $ B_{(i)},B^{(i)},B_{(i+1)} $ or $ B^{(i+1)} $ equal $ B_n $ and add indicator functions, so that the resulting random variable is distributed as $ T_{\cox{B}_{n-1}} $.
	Then, as $ \mathbb{E}(T_{\cox{D}_n})=n $ and $ \mathbb{E}(T_{\cox{B}_{n-1}})=n-1 $ (see for example in  \cite{KahleStump2018}),
	\begin{align}
	\frac{T_{\cox{D}_n}-\mathbb{E}(T_{\cox{D}_n})}{\sqrt{\mathbb{V}(T_{\cox{D}_n})}}&=\frac{T_{\cox{B}_{n-1}}+Y_n-n}{\sqrt{\mathbb{V}(T_{\cox{D}_n})}},\nonumber \\
	\intertext{where $ Y_n=T_{\cox{D}_n}-T_{\cox{B}_{n-1}} $ is a random variable with $ |Y_n|\le c $ for some positive constant $ c $ and all $ n $, so}
	\frac{T_{\cox{D}_n}-\mathbb{E}(T_{\cox{D}_n})}{\sqrt{\mathbb{V}(T_{\cox{D}_n})}}&=\frac{{\sqrt{\mathbb{V}(T_{\cox{B}_{n-1}})}}}{{\sqrt{\mathbb{V}(T_{\cox{D}_n})}}}\frac{T_{\cox{B}_{n-1}}-(n-1)}{\sqrt{\mathbb{V}(T_{\cox{B}_{n-1}})}}+\frac{Y_n-1}{{\sqrt{\mathbb{V}(T_{\cox{D}_n})}}}.\label{eq:CLT-D}
	\end{align}
	We know from Theorem \ref{t:CLT_B} that $ \frac{T_{\cox{B}_{n-1}}-(n-1)}{\sqrt{\mathbb{V}(T_{\cox{B}_{n-1}})}} $ converges in distribution to a standard normal distribution. $ Y_n $ is bounded, as it is a finite sum of indicator functions. Therefore, $ \lim\limits_{n\rightarrow \infty} \frac{Y_n-1}{{\sqrt{\mathbb{V}(T_{\cox{D}_n})}}}=0 $ almost surely and $ \lim\limits_{n \rightarrow \infty}\frac{{\sqrt{\mathbb{V}(T_{\cox{B}_{n-1}})}}}{{\sqrt{\mathbb{V}(T_{\cox{D}_n})}}}=~1 $ (compare \cite[Corollary 5.2]{KahleStump2018}). Therefore, $ T_{\cox{D}_n} $ satisfies the central limit theorem (see Slutsky's theorem, for example in \cite[Theorem 2.3.3]{Lehmann1998}).
	
\end{proof}

\section{Generalization to a Class of Statistics with Local Degree \texorpdfstring{$  k $}{k}}\label{s:generalization}

As in \cite{chatterjee2017}, it is possible to generalize the proof of Theorem \ref{t:CLT_B} to a wider class of statistics of local degree $ k $. These statistics are of the form
\[F_1(\pi)+F_2(\pi^{-1}),\]
where the local components' absolute value is bounded by $ 1 $. If $ \pi $ is a signed permutation, a bound for the Wasserstein distance between the normalized statistic and the standard normal distribution follows. Therefore the central limit theorem for these statistics holds, if the variance of the statistics is of order $ n^{\frac{1}{2}+\varepsilon} $ for an $ \varepsilon>0 $. The Theorem is implied from a generalization of the proof of Theorem \ref{t:CLT_B} by constructing the symmetric interaction rule in the right way. 

\begin{thm}\label{t:CLT_arbitrary}
Let $ W_n  $ be a sequence of growing rank of Coxeter groups of type $  \cox{B} $ and let $ F_1,F_2 $ be statistics of local degree $ k $, with the absolute value of their local components bounded by $ 1 $. The statistic $ F_1(\pi)+F_2(\pi^{-1}) $ gives rise to a random variable $ F $. 
The Wasserstein distance between $ F $, normalized by its mean and variance, and the standard normal distribution satisfies
\[\delta_{F}\le C(k) \left( \frac{ \sqrt{n}}{s^2}+\frac{n}{s^3}\right)\]
for $ s^2:=\mathbb{V}(F_1(\pi)+F_2(\pi^{-1})) $ and some constant $ C(k) $.

\end{thm}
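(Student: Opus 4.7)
The plan is to adapt the proof of Theorem \ref{t:CLT_B} by replacing the ``distance $1$'' threshold appropriate to descents with a ``distance $k$'' threshold encoding the local degree of $F_1$ and $F_2$. First I reuse the probabilistic model from the proof of Theorem \ref{t:CLT_B}: let $X_i=(U_i,V_i,B_i)$ be i.i.d.\ with $(U_i,V_i)\sim\text{Unif}([0,1]^2)$ and $B_i\sim\text{Ber}(\tfrac{1}{2})$ on $\{-1,+1\}$, and build $\tilde{\pi}$ and $\tilde{\sigma}=\tilde{\pi}^{-1}$ exactly as there. Setting $f(X)=F_1(\tilde{\pi})+F_2(\tilde{\sigma})$, the local-degree-$k$ hypothesis decomposes $f$ into a sum of local components, each depending on at most $k$ consecutive values of $\tilde{\pi}$ (respectively $\tilde{\sigma}$), hence on $k$ consecutive $X_{(\cdot)}$'s (respectively $X^{(\cdot)}$'s) together with their attached signs, and each bounded in absolute value by $1$.

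Next, I define the symmetric graphical rule $G(x)$ on $[n]$ by declaring $\{i,j\}$ an edge if and only if the $x$-ranks of $x_i,x_j$ differ by at most $k$ or their $y$-ranks do. Symmetry under relabeling is immediate from the rank-invariance of the construction. To verify the interaction property, suppose $(i,j)$ is a non-edge in each of $G(x),G(x^i),G(x^j),G(x^{ij})$; then in all four configurations both rank distances between $i$ and $j$ exceed $k$, so no local component jointly involves $x_i$ and $x_j$ in any of them, and the same inclusion--exclusion used for Equation~(\ref{eq:symmetric}) gives $f(x)=f(x^i)+f(x^j)-f(x^{ij})$. For the extension on $\mathcal{X}^{n+4}$ I take $G'(x)$ with threshold $k+4$: inserting four additional points shifts pre-existing ranks by at most $4$, so $G$ sits inside the restriction of $G'$ to $[n]$, and each vertex of $G'$ has degree at most $4(k+4)$, giving $\delta\le 4(k+4)+1$.

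Finally, for the Lipschitz bound, a single swap $X_j\to X_j'$ produces a cascade of unit rank shifts in both orderings, but because each local component depends on a window of $k$ \emph{consecutive} ranks, the shifted components match pairwise across the old and new sums; only $O(k)$ boundary components near the old and new positions of $X_j$ and $X_j'$ fail to cancel, and each contributes at most $2$. Thus $|\Delta_j f(X)|\le C_1(k)$ for a constant depending only on $k$. Feeding $M\le C_1(k)$ and $\delta\le 4(k+4)+1$ into Theorem \ref{t:chatterjee} yields
\[\delta_F \le \frac{C(k)\sqrt{n}}{s^2}+\frac{C(k)\,n}{s^3},\]
as claimed. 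The main obstacle is precisely this telescoping count, since a single $X_j$-swap perturbs ranks globally; the fact that components depend on $k$ consecutive positions is what localizes the net effect to an $O(k)$-sized boundary, and the signs $B_{(i)}$ and $B^{(i)}$ travel with their positions so the counting transfers without change from the unsigned to the signed setting.
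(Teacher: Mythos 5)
Your proposal follows essentially the same route as the paper's (very terse) proof: define the interaction graph by a rank-difference threshold tied to the local degree $k$, extend it on $\mathcal{X}^{n+4}$ with the threshold enlarged by $4$, bound $M$ and $\delta$ by constants depending only on $k$, and feed these into Theorem \ref{t:chatterjee}. Your thresholds ($k$ and $k+4$ rather than the paper's $k-1$ and $k+3$) are slightly larger but equally valid, and your explicit justification of the Lipschitz bound $|\Delta_j f|\le C_1(k)$ via the cancellation of shifted windows is a correct filling-in of a step the paper leaves implicit.
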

\begin{proof}
	If the statistics $ F_1 $ and $ F_2 $ are of local degree $ k $ and their local components' absolute value is bounded by $ 1 $, let $ \{i,j\} $ be an edge in $ G(x) $ if and only if the x-ranks or the y-ranks differ by at most $ k-1 $. For the extension $ G'(x) $, we say that $ \{i,j\} $ is an edge if and only if the ranks differ by at most $ k+3 $. Then, Theorem \ref{t:chatterjee} applies, and the Wasserstein distance is bounded:
	\[\delta_F \le  C(k) \left( \frac{ \sqrt{n}}{s^2}+\frac{n}{s^3}\right).\]
	Here, $ C(k) $ is a large enough constant.
\end{proof}
To see that the bound in Theorem \ref{t:CLT_arbitrary} also holds when  $ \pi $ is an element of a Coxeter group of type $ \cox{D}_n $, we use the same technique as in the proof of Theorem \ref{t:CLT_D}.
Hence, we decompose the statistic into a part that is the same statistic depending on a signed permutation on $ \{\pm 1, \ldots,\pm (n-1)\} $ and a finitely bounded random variable.

\begin{thm}\label{t:CLT_arbitrary-D}
Let $ W_n $ be a sequence of growing rank of Coxeter groups of type $  \cox{D} $ and let $ F_1,F_2 $ be statistics of local degree $ k $, with the absolute value of their local components bounded by $ 1 $. The statistic $ F_1(\pi)+F_2(\pi^{-1}) $ gives rise to a random variable $ F $. Then, if we assume that $ \mathbb{V}(F)\rightarrow \infty $, the Wasserstein distance between $ F $,
normalized by its mean and variance, and the standard normal distribution satisfies
\[\delta_{F}\le C(k) \left( \frac{ \sqrt{n-1}}{s^2}+\frac{n-1}{s^3}\right)+o(1)\]
for $ s^2:=\mathbb{V}(F_1(\pi)+F_2(\pi^{-1})) $ and some constant $ C(k) $.
\end{thm}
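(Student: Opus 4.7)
The plan is to mimic the Slutsky-type reduction used in Theorem \ref{t:CLT_D}, adapted to the bounded-local-degree setting. First I would reuse the random construction from that proof: let $X_1,\ldots,X_{n-1}$ be i.i.d.\ of the form $(U_i,V_i,B_i)$ and set $X_n=(U_n,V_n,\prod_{i=1}^{n-1}B_i)$, so that the induced signed permutation $\tilde{\pi}$ is uniform on $\cox{D}_n$, while the first $n-1$ random variables alone produce, via the $\cox{B}$-construction of Theorem \ref{t:CLT_B}, a uniform element $\tilde{\pi}'$ of $\cox{B}_{n-1}$. Define $F':=F_1(\tilde{\pi}')+F_2((\tilde{\pi}')^{-1})$ and $Y_n:=F-F'$; since $F_1$ and $F_2$ have local degree $k$ and local components bounded by $1$, only $O(k)$ local components on either side are affected by removing the $n$-th position from $\tilde{\pi}$ and restoring the $\cox{B}_{n-1}$ structure, exactly as in Theorem \ref{t:CLT_D}. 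Consequently $|Y_n|\le c_0(k)$ almost surely for some constant $c_0(k)$ independent of $n$, and $F'$ has precisely the distribution to which Theorem \ref{t:CLT_arbitrary} applies.

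With the decomposition $F=F'+Y_n$ in hand, I would combine Theorem \ref{t:CLT_arbitrary} applied to $F'$ with the triangle inequality for the Wasserstein metric and the coupling estimate $\delta_W(X,Y)\le\mathbb{E}|X-Y|$. Writing $s^2:=\mathbb{V}(F)$ and $s'^2:=\mathbb{V}(F')$, the identity
\[\frac{F-\mathbb{E}(F)}{s}=\frac{s'}{s}\cdot\frac{F'-\mathbb{E}(F')}{s'}+\frac{Y_n-\mathbb{E}(Y_n)}{s}\]
splits $\delta_F$ into three parts: the bound $C(k)\bigl(\sqrt{n-1}/s'^2+(n-1)/s'^3\bigr)$ from Theorem \ref{t:CLT_arbitrary} applied to $F'$, a rescaling error bounded by $|s'/s-1|\cdot\mathbb{E}|(F'-\mathbb{E}F')/s'|\le|s'/s-1|$, and a shift error bounded by $\mathbb{E}|(Y_n-\mathbb{E}Y_n)/s|\le 2c_0(k)/s$.

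The main obstacle, and where the hypothesis $\mathbb{V}(F)\to\infty$ enters, is the passage from $s'$ to $s$ in the leading term. Cauchy-Schwarz together with $|Y_n|\le c_0(k)$ yields $|s^2-s'^2|\le 2c_0(k)s'+c_0(k)^2$, so both $s$ and $s'$ tend to infinity and $s'/s\to 1$; this makes the shift and rescaling errors $o(1)$. From $|s^2-s'^2|=O(s')$ one further derives $\sqrt{n-1}/s'^2=\sqrt{n-1}/s^2+O(\sqrt{n-1}/s^3)$ and $(n-1)/s'^3=(1+o(1))(n-1)/s^3$. The lower-order corrections are absorbed either into the leading rate $C(k)\bigl(\sqrt{n-1}/s^2+(n-1)/s^3\bigr)$ with a slightly enlarged constant when that rate does not vanish, or into the $o(1)$ remainder when it does, producing the stated bound.
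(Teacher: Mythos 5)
Your proposal is correct and follows essentially the same route as the paper: the same decomposition $F=F'+Y_n$ via the $\cox{B}_{n-1}$ statistic built from $(X_1,\ldots,X_{n-1})$, Theorem \ref{t:CLT_arbitrary} applied to $F'$, and a Slutsky-type absorption of the bounded remainder. You actually supply more detail than the paper does at the final step (the explicit coupling bound $\delta_W(X,Y)\le\mathbb{E}|X-Y|$, the estimate $|s^2-s'^2|\le 2c_0(k)s'+c_0(k)^2$, and the conversion of the rate from $s'$ to $s$), where the paper simply asserts $\delta_F\le\delta_{F'}+o(1)$.
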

\begin{proof}
Let $ F=F_1(\pi_1)+F_2(\pi_1^{-1})=f(X) $ where $ \pi_1 $ is a uniformly chosen element of the Coxeter group of type $ \cox{D}_n $. Let $ X=(X_1,\ldots,X_n) $ be generated as in the proof of Theorem \ref{t:CLT_D}, so $ X_i = (U_i,V_i,B_i) $ with $ (U_i,V_i)\sim $Unif$ \left([0,1]^2\right) $. $ B_i $ is an independent random sign for $ 1\le i \le n-1 $ and $ B_n= \prod_{i=1}^{n-1}B_i $. Then, $ F' $ is the statistic where we remove all local components that depend on $ B_n $. Subsequently we add local components, so that the resulting statistic is $F'=F_1(\pi_2)+F_2(\pi_2^{-1})$, where $ \pi_2 $ is a random signed permutation on $ \{\pm 1,\ldots \pm (n-1)\} $ generated by $ (X_1,\ldots,X_{n-1}) $. Then, as the local degree is~$ k $, $ F-F'=O(1) $ and therefore $ \mathbb{E}(F-F')=O(1) $ and $ \mathbb{V}(F-F')=O(1) $, which implies that $ \mathbb{V}(F')=\mathbb{V}(F)+O(1) $.
Now, see that \cref{eq:CLT-D} from the proof of \cref{t:CLT_D} generalizes to
\begin{align*}
\frac{F-\mathbb{E}(F)}{\sqrt{\mathbb{V}(F)}}&=\frac{{\sqrt{\mathbb{V}(F')}}}{{\sqrt{\mathbb{V}(F)}}}\frac{F'-\mathbb{E}(F')}{\sqrt{\mathbb{V}(F')}}+\frac{F-F'-\mathbb{E}(F-F')}{{\sqrt{\mathbb{V}(F)}}},
\end{align*}
which immediately shows that the Wasserstein distance between $ F $ and $ F' $ tends to zero, because $ \lim\limits_{n \rightarrow \infty} \frac{\mathbb{V}(F')}{\mathbb{V}(F)}=~1 $ and $ \lim\limits_{n \rightarrow \infty}\frac{F-F'-\mathbb{E}(F-F')}{{\sqrt{\mathbb{V}(F)}}}=0 $. Therefore it holds that $ \delta_F\le \delta_{F'}+o(1) $ and the theorem follows.
\end{proof}

\section{The Statistic \texorpdfstring{$ (\on{des}(\pi),\on{des}(\pi^{-1})) $}{(des(pi),des(sigma)}} \label{s:cramerwoldirreducibletypes}
This section derives a two-dimensional central limit theorem for the vector statistic defined as $ (\on{des}(\pi),\on{des}(\pi^{-1})) $ for $ \pi $ being either an element of a Coxeter group of type $ \cox{B}_n $ or $ \cox{D}_n $. This is achieved with the Cram\'er--Wold device and a slight modification of the proofs of Theorems \ref{t:CLT_B} and \ref{t:CLT_D}. The Cram\'er--Wold device shows the equivalence of the convergence in distribution between a random vector and every linear combination of its elements. It is also known as the Theorem of Cram\'er--Wold (see for example in \cite[Theorem~29.4]{billingsley1995probability}). 
\begin{thm}[Cram\'er--Wold]\label{t:cramer-wold}
	Let $ \bar{X}_n=(X_{n1},\ldots,X_{nk}) $ and $ \bar{X}=(X_{1},\ldots,X_{k}) $ be random vectors of dimension $ k $. Then, $ \bar{X}_n \stackrel{D}{\rightarrow}\bar{X} $, if and only if 
	\[\sum_{i=1}^{k}t_i X_{ni}\stackrel{D}{\rightarrow}\sum_{i=1}^{k}t_i X_{i}\]
	for each $ t=(t_1,\ldots,t_k)\in \mathbb{R}^{k} $ and for $ n \rightarrow \infty $.
\end{thm}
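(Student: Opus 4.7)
The plan is to reduce the statement to the one-dimensional L\'evy continuity theorem via characteristic functions. I would introduce the characteristic functions $\phi_n(t):=\mathbb{E}[\exp(i\langle t,\bar{X}_n\rangle)]$ and $\phi(t):=\mathbb{E}[\exp(i\langle t,\bar{X}\rangle)]$ for $t\in \mathbb{R}^k$, and immediately exploit the identity $\phi_n(t)=\mathbb{E}\bigl[\exp\bigl(i\sum_{j=1}^k t_j X_{nj}\bigr)\bigr]$. This is precisely the characteristic function of the real-valued random variable $\sum_{j=1}^{k}t_j X_{nj}$ evaluated at the point $1\in\mathbb{R}$, and the same reformulation applies to $\phi(t)$.

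For the forward direction, assuming $\bar{X}_n\stackrel{D}{\to}\bar{X}$, I would invoke the continuous mapping theorem with the continuous linear functional $\bar{x}\mapsto \langle t,\bar{x}\rangle$ to conclude $\sum_{j=1}^{k}t_j X_{nj}\stackrel{D}{\to}\sum_{j=1}^{k}t_j X_j$ for each $t\in\mathbb{R}^k$. This direction is essentially formal.

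For the converse, assuming that every linear combination converges in distribution, I would fix $t\in\mathbb{R}^k$ and apply the one-dimensional L\'evy continuity theorem to $\sum_{j=1}^{k}t_j X_{nj}\stackrel{D}{\to}\sum_{j=1}^{k}t_j X_j$. This gives pointwise convergence of the characteristic functions of these real random variables, and specializing to the argument $1$ yields $\phi_n(t)\to\phi(t)$ for every $t\in\mathbb{R}^k$ by the reformulation of the first paragraph. Since $\phi$ is the characteristic function of the honest random vector $\bar{X}$, it is continuous at the origin, so the multidimensional L\'evy continuity theorem delivers $\bar{X}_n\stackrel{D}{\to}\bar{X}$.

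The main obstacle is external to the Cram\'er--Wold argument itself: the substantive input is the multidimensional L\'evy continuity theorem, whose proof requires establishing tightness of $(\bar{X}_n)$ from the pointwise convergence of $\phi_n$ together with the continuity of $\phi$ at $0$. Once that machinery is in hand, everything specific to Cram\'er--Wold is a bookkeeping reduction that hinges on the observation that a multivariate characteristic function, evaluated at $t$, coincides with a univariate characteristic function of a linear combination, evaluated at $1$.
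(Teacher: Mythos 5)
Your proof is correct and is the standard characteristic-function argument; the paper does not prove this statement itself but quotes it as a classical result (Billingsley, Theorem~29.4), whose proof is essentially the same reduction you give, resting on the identity $\mathbb{E}\bigl[\exp\bigl(i\langle t,\bar{X}_n\rangle\bigr)\bigr]=\mathbb{E}\bigl[\exp\bigl(i\cdot 1\cdot\sum_{j}t_jX_{nj}\bigr)\bigr]$ and the multidimensional L\'evy continuity theorem. One minor remark: in the converse direction, passing from $\sum_j t_jX_{nj}\stackrel{D}{\rightarrow}\sum_j t_jX_j$ to pointwise convergence of the univariate characteristic functions needs only the definition of convergence in distribution applied to the bounded continuous functions $x\mapsto\cos(sx)$ and $x\mapsto\sin(sx)$, not the continuity theorem itself; the genuinely substantive input is, as you correctly identify, the tightness argument inside the multivariate continuity theorem at the final step.
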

We use the short-hand notation $ (D_n ,D'_n) $ for the random variable that rises from $ (\on{des}(\pi),\on{des}(\pi^{-1})) $.
With Theorem \ref{t:cramer-wold}, we can show the convergence of $ (D_n ,D'_n) $ by studying linear combinations of the form $ t_1 D_n+t_2 D'_n $. It is sufficient to only check linear combinations with $ t\in S^{1} $, since the investigated statistic is normalized by the square root of the variance $ \mathbb{V}(t_1 D_n+t_2 D'_n) $. This leads to the following theorem:
\begin{thm}\label{t:CLT_twodimensional}
	Let $ W_n $ be a sequence of Coxeter groups of growing rank of either type $ \cox{B}_n $ or $ \cox{D}_n $. Then, the statistic $ (D_n,D'_n) $ satisfies a two-dimensional central limit theorem of the form
	\[ \Sigma_n^{-\frac{1}{2}} \begin{pmatrix}
	D_n-\mathbb{E}(D_n)\\
	D'_n-\mathbb{E}(D'_n)\\
	\end{pmatrix}\stackrel{D}{\rightarrow}N_2(0,I)\]
	for $ n\rightarrow \infty $, where $ I $ denotes the two-dimensional identity matrix and $ \Sigma_n $ is the covariance matrix of $ (D_n,D'_n) $.
\end{thm}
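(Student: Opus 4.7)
The plan is to apply the Cram\'er--Wold device (Theorem \ref{t:cramer-wold}). It suffices to prove that for every fixed $t=(t_1,t_2)\in S^1$,
\[\frac{t_1(D_n-\mathbb{E}(D_n))+t_2(D'_n-\mathbb{E}(D'_n))}{\sqrt{\mathbb{V}(t_1 D_n+t_2 D'_n)}}\stackrel{D}{\rightarrow}N(0,1).\]
Together with positive definiteness of the limiting normalized covariance matrix, this implies the stated convergence $\Sigma_n^{-1/2}(D_n-\mathbb{E}(D_n),D'_n-\mathbb{E}(D'_n))^T\stackrel{D}{\rightarrow}N_2(0,I)$ by the continuous mapping theorem and Slutsky.

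For a Coxeter group of type $\cox{B}_n$ I would replay the proof of Theorem \ref{t:CLT_B} with the function $f(X)=t_1\on{des}(\tilde{\pi})+t_2\on{des}(\tilde{\sigma})$ in place of $\on{des}(\tilde{\pi})+\on{des}(\tilde{\sigma})$. The random model, the graphical rule on x- or y-rank differences at most $1$, and its extension to rank differences at most $5$ are unchanged. Since $f$ is only a weighted sum of the same indicators, $G(x)$ remains a symmetric interaction rule and $G'(x)$ a symmetric extension of maximal vertex degree at most $20$, so $\delta\le 21$. The bound on the single-coordinate change becomes $|\Delta_j f(X)|\le 4(|t_1|+|t_2|)$, a constant depending only on $t$, and Theorem \ref{t:chatterjee} then yields a Wasserstein bound of the shape
\[\frac{C(t)\sqrt{n}}{\sigma_t^2}+\frac{C(t)\,n}{\sigma_t^3},\qquad \sigma_t^2:=\mathbb{V}(t_1 D_n+t_2 D'_n).\]

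The hard part is controlling $\sigma_t^2$: this bound vanishes only if $\sigma_t^2$ grows at least linearly in $n$, which is equivalent to $\Sigma_n/n$ admitting a positive definite limit. I would compute $\mathbb{V}(D_n)=\mathbb{V}(D'_n)$ and $\operatorname{Cov}(D_n,D'_n)$ via the indicator expansion of $\on{des}$ combined with symmetry, cross-checking with the identity $\mathbb{V}(D_n+D'_n)=(n+3)/6$ from \cite{KahleStump2018}. Once the limit of $\Sigma_n/n$ is shown to be positive definite, one obtains $\sigma_t^2\ge c\,n$ uniformly on $S^1$, the Wasserstein bound vanishes, and the one-dimensional CLT follows.

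For type $\cox{D}_n$ I would adapt the Slutsky argument from the proof of Theorem \ref{t:CLT_D} to each linear combination: generated via the same model with $B_n=\prod_{i=1}^{n-1}B_i$, the statistic $t_1 D_n+t_2 D'_n$ differs from its type-$\cox{B}_{n-1}$ analogue by a uniformly bounded remainder consisting of the finitely many indicators that involve $B_n$. Dividing by $\sqrt{\sigma_t^2}\to\infty$ makes this remainder negligible, the variance ratio tends to $1$, and Slutsky's theorem transports the $\cox{B}_{n-1}$-CLT into the $\cox{D}_n$-CLT for the same combination. Assembling the one-dimensional statements via Cram\'er--Wold then delivers the two-dimensional CLT in the stated form.
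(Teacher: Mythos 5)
Your proposal is correct and follows essentially the same route as the paper: Cram\'er--Wold reduction to the linear combinations $t_1D_n+t_2D'_n$ with $t\in S^1$, a replay of the interaction-graph argument (respectively the Slutsky transfer for type $\cox{D}_n$) for the weighted combination, and a final renormalization using $\mathbb{V}(t_1D_n+t_2D'_n)/\mathbb{V}(D_n)\to 1$ and $\Sigma_n/\mathbb{V}(D_n)\to I$. The only cosmetic difference is that you propose to compute $\mathbb{V}(D_n)$ and $\operatorname{Cov}(D_n,D'_n)$ by hand to certify the positive definite limit of $\Sigma_n/n$, whereas the paper simply cites \cite{KahleStump2018} for $\mathbb{V}(D_n)=\mathbb{V}(D'_n)$ of order $n$ and $\operatorname{Cov}(D_n,D'_n)=O(1)$.
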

\begin{proof}
	Via the Theorem of Cram\'er--Wold, we can study the convergence of $ (D_n,D'_n) $ by studying $ t_1 D_n+t_2 D'_n $ for $ t^T=(t_1,t_2)\in S^{1} $. We derive a convergence
	\begin{align}
	t^{T} \frac{1}{\sqrt{\mathbb{V}(D_n)}} \begin{pmatrix}
	D_n-\mathbb{E}(D_n)\\
	D'_n-\mathbb{E}(D'_n)\\
	\end{pmatrix}\stackrel{D}{\rightarrow} N(0,1) \label{eq:2x2-proof_1}
	\end{align}
	to show the Theorem via an application of Slutsky's Theorem. (\ref{eq:2x2-proof_1}) is equivalent to
	\begin{align}
	\frac{1}{\sqrt{\mathbb{V}(D_n)}}(t_1  D_n+t_2 D'_n-(t_1+t_2)\mathbb{E}(D_n)) \stackrel{D}{\rightarrow} N(0,1),\label{eq:2x2-proof_2}
	\end{align}
	as $ \mathbb{E}(D_n) =\mathbb{E}(D'_n)$. Now, since $ t\in S^{1} $, the proofs of the Theorems \ref{t:CLT_B} and \ref{t:CLT_D} apply, which means that 
	\[ \frac{t_1  D_n+t_2 D'_n-(t_1+t_2)\mathbb{E}(D_n)}{\sqrt{\mathbb{V}(t_1  D_n+t_2 D'_n)}}\stackrel{D}{\rightarrow} N(0,1). \]
	This convergence is also a consequence of Theorem \ref{t:CLT_arbitrary} or Theorem \ref{t:CLT_arbitrary-D}, as the local components of $ t_1  D_n+t_2 D'_n $ are still bound by $ 1 $ and the local dependency structure is not changed by multiplying the sum of indicator functions that model $ D_n $ and $ D'_n $ with constants. Furthermore, the variance $ \mathbb{V}(t_1  D_n+t_2 D'_n) $ is of order $ n $ and therefore, the Wasserstein distance to the standard normal distribution is bound by a vanishing function in $ n $.
	Now, by Slutsky's Theorem, (\ref{eq:2x2-proof_2}) and therefore (\ref{eq:2x2-proof_1}) is satisfied as 
	\[\frac{\mathbb{V}(t_1 D_n+t_2 D'_n)}{\mathbb{V}(D_n)}\stackrel{a.s}{\rightarrow}1.\]
	This results from the fact that $ \mathbb{V}(D_n)=\mathbb{V}(D'_n)) $ and $ \text{Cov}(D_n,D'_n)=O(1) $ (see \cite{KahleStump2018}) and that $ t_1^2+t_2^2=1 $. Because of the convergence in (\ref{eq:2x2-proof_1}), the theorem follows via another application of Slutsky's Theorem, as
	\begin{align*}
	\frac{1}{\mathbb{V}(D_n)}\Sigma_n &=\frac{1}{\mathbb{V}(D_n)}\begin{pmatrix}
	\mathbb{V}(D_n)&&\text{Cov}(D_n,D'_n)\\
	\text{Cov}(D_n,D'_n)&&\mathbb{V}(D'_n)\\
	\end{pmatrix}\\
	&\stackrel{a.s.}{\rightarrow}I,
	\end{align*}
	since $ \text{Cov}(D_n,D'_n)=O(1) $ and $ \mathbb{V}(D_n)=\mathbb{V}(D'_n) $.
\end{proof}
\begin{remark}
	Theorem \ref{t:CLT_twodimensional} can be generalized to certain statistics $ (F_1(\pi),F_2(\pi^{-1})) $, if $ F_1 $ and $ F_2 $ meet the constraints of Theorem \ref{t:CLT_arbitrary} or Theorem \ref{t:CLT_arbitrary-D}, $ \mathbb{V}(F_1(\pi))=\mathbb{V}(F_2(\pi^{-1})) $ holds and $ \mathbb{V}(F_1(\pi)) $ is big enough so that the constraint to the Wasserstein distance in Theorem \ref{t:CLT_arbitrary} or Theorem \ref{t:CLT_arbitrary-D} converges to zero for $ n $ going to infinity.
\end{remark}

\section{Further Investigation}

This paper showed the central limit behaviour for $ D(\pi)+D(\pi^{-1}) $, where $ \pi $ is an element of either a Coxeter group of type $ \cox{B}_n $ or of type $ \cox{D}_n $. 
A natural direction for further investigation are arbitrary series of product groups of Coxeter groups of type $ \cox{A}_n,\cox{B}_n $ and $ \cox{D}_n $ and under which constraints the asymptotic normality of $ D(\pi)+D(\pi^{-1}) $ is preserved (see Problem 6.10 in \cite{KahleStump2018}). 
By November 2019, using the results of this paper, this was done by Brück and R\"ottger \cite{BR2019} and Féray \cite{Feray2019}.

\bibliographystyle{plain}
\bibliography{bibliography}

\end{document}